\newtheorem{definition}{Definition}[section]
\newtheorem{theorem}{Theorem}[section]
\newtheorem{example}{Example}[section]
\newtheorem{remark}{Remark}[section]
\numberwithin{equation}{section}
\begin{document}
	
	\title[Numerical and Graphical Exploration of the...]
	{ Numerical and Graphical Exploration of the Generalized Beta-Logarithmic Matrix Function and Its Properties}

	\author[{\bf N. U. Khan, R. Sk and M. Hassan}]{\bf Nabiullah Khan, Rakibul Sk and Mehbub Hassan}
	
	\bigskip
	\address{Nabiullah Khan: Department of Applied
		Mathematics, Faculty of Engineering and Technology,
		Aligarh Muslim University, Aligarh 202002, India}
	\email{nukhanmath@gmail.com, Orcid Id:0000-0003-0389-7899.}
	
	\bigskip

	\address{ Rakibul Sk: Department of Applied
		Mathematics, Faculty of Engineering and Technology,
		Aligarh Muslim University, Aligarh 202002, India}
	\email{rakibulsk375@gmail.com, Orcid Id:0009-0003-6131-1419.}
	
	\bigskip
	\address{Mehbub Hassan: Department of Applied
		Mathematics, Faculty of Engineering and Technology,
		Aligarh Muslim University, Aligarh 202002, India}
	\email{mehbubhassan406@gmail.com, Orcid Id:0009-0008-0992-4580.}

	\keywords{{Logarithmic mean, beta matrix function, beta-logarithmic matrix function. }}
	
	\subjclass[2020]{ 33B15, 15A16, 65F60, 33C05.}
	
	\begin{abstract}
		This paper investigates the generalized beta-logarithmic matrix function (GBLMF), which combines the extended beta matrix function and the logarithmic mean. The study establishes essential properties of this function, including functional relations, inequalities, finite and infinite sums, integral representations, and partial derivative formulas. Theoretical results are accompanied by numerical examples and graphical representations to demonstrate the behavior of the new matrix function. Additionally, a comparison with classical and previously studied beta matrix functions is presented to highlight the differences and advantages of the generalized version. The findings offer valuable insights into the properties and applications of the extended beta-logarithmic matrix function in various mathematical and applied contexts.
	\end{abstract}
	
	\maketitle
	
	\section{\bf{Introduction and Preliminaries}}
	\noindent In parallel with the development of the extended scalar special functions, the concept of special matrix functions has gained increasing attention. The introduction of matrix versions of the gamma and beta functions was pioneered by Jódar et al. \cite{jod-cor,jode-cort}, and it has since evolved into a vibrant field of research. These matrix extensions allow us to investigate properties of classical special functions in the context of matrices, thus extending their applicability to problems involving multidimensional systems. The matrix generalizations of beta and gamma functions have demonstrated significant utility in areas such as matrix analysis, control theory, signal processing, and optimization, highlighting the importance of studying their properties in the matrix setting.
	
	Over the years, many researchers have built upon the original matrix function theory, developing various generalizations and extensions of the beta matrix function \cite{ab-ba,cekim-b,dw-sa,dwiv-saha,goy-iri,kha-agar,hus-kha,khan-rakib,verma-sahai}. These extensions not only preserve many characteristics of the classical beta function but also offer new tools for addressing more complex, multidimensional problems. The focus on matrix-based special functions has become an essential part of modern mathematical research, given their critical role in modeling and solving real-world problems in diverse scientific and engineering disciplines.
	
	In this article, we explore a new generalization of the beta-logarithmic matrix function which combines the extended beta matrix function with the logarithmic mean. Our goal is to establish a thorough understanding of its key properties, including functional relationships, inequalities, sums, integral representations, and partial derivatives. By offering a detailed analysis, we aim to elucidate the differences between this new function and previously studied beta matrix functions. Through theoretical results and numerical examples, we provide both a rigorous and practical perspective on this extended matrix function, demonstrating its potential advantages and applications.
	
	To set the stage for the discussion of the generalized beta-logarithmic matrix function, we first review some essential concepts related to special matrix functions. This will help readers better appreciate the mathematical framework underlying the new results and understand the broader context of this study within the growing body of research on matrix extensions of classical special functions.\\  
	
	Throughout the paper, let $I$ and $O$ denote the identity matrix and zero matrix in $\mathbb{C}^{k\cross k}$, respectively, and where $\mathbb{C}^{k\cross k}$ is the vector space of $k$-square matrices with complex entries. For a matrix $P\in$ $\mathbb{C}^{k\cross k}$, the spectrum is denoted by $\sigma(P)$ and it is the set of all eigenvalues of the matrix $P$. A matrix $P\in$ $\mathbb{C}^{k\cross k}$ is positive stable matrix if $\real(\mu)>0$, $\forall \mu \in \sigma(P)$. Let $\mathscr{M}_k$ be the vector space of all the $k\cross k$ hermitian positive stable matrices of order $k \in \mathbb{N}$ whose entries are in the set of complex numbers $\mathbb{C}.$ For a matrix $P\in \mathscr{M}_k$, the norm of the matrix, $P$ is defined by
	\begin{equation}\label{eq1.1}
		\|P\|= \max_{x \neq 0} \frac{\|Px\|}{\|x\|}.
	\end{equation}
	The infinity norm of a square matrix is the maximum of the absolute row sums. For the matrix $P\in \mathscr{M}_k$, $\|P\|_\infty$ is given by
	\begin{equation}\label{eq1.2}
		\|P\|_\infty=\max_{1\leq i\leq k} \sum_{j=1}^{k} |\alpha_{ij}|.
	\end{equation}
	
	In 1997, Jodar and Cortes introduced matrix parameters in the classical Euler beta function. If $P,Q\in \mathscr{M}_k$, such that $PQ=QP$, the classical beta matrix function (CBMF) $\mathfrak{B}(P,Q)$ is well defined as follows \cite{jod-cor}:
	\begin{equation}\label{eq1.3}
		\mathfrak{B}(P,Q)=\int_{0}^{1} {x}^{P-I}(1-x)^{Q-I} ~dx=\mathfrak{B}(Q,P).
	\end{equation} 
	
	For $P,Q$ and $R \in \mathscr{M}_k,$ the extended beta function involving matrix argument is defined by Abdalla and Bakhet \cite{ab-ba} as follows:
	\begin{equation}\label{eq1.4}
		\mathfrak{B}^{R}(P,Q)=\int_{0}^{1}x^{P-I}(1-x)^{Q-I}\exp\left(-\frac{R}{x(1-x)}\right)~dx.
	\end{equation}
	
	The logarithmic mean  which lies between the geometric mean and arithmetic mean for $a,b>0$ is given by 
	\begin{equation}\label{eq1.5}
		\mathfrak{L}(a,b)=\int_{0}^{1} a^{1-x}~b^x~dx=\begin{cases}
			a\qquad\qquad\qquad\qquad\qquad a=b\\\frac{a-b}{\log(a)-\log(b)}\qquad\qquad\qquad a\neq b
		\end{cases}.
	\end{equation}

	For $P,Q$ and $R \in \mathscr{M}_k,$ , the extended beta-logarithmic matrix function (EBLMF) is defined by Alqarni \cite{alq} as follows:
	\begin{equation}\label{eq1.6}
		\mathfrak{BL}^{R}(a,b;P,Q)=\int_{0}^{1}a^{1-x}~b^{x}x^{P-I}(1-x)^{Q-I}\exp\left(-\frac{R}{x(1-x)}\right)~dx.
	\end{equation}
	
	Later in 2024, Khan et al. \cite{khan-sk} investigated a new kind of beta function using two parameter Mittag-Leffler function as follows:
	\begin{equation}\label{eq1.7}
		\mathfrak{B}_{(\phi,\psi)}^{(r,s,\eta,\xi)}(p,q)=\int_{0}^{1}x^{p-1}(1-x)^{q-1}\mathcal{E}_{(\phi,\psi)}\left(-\frac{r}{x^{\eta}}\right)\mathcal{E}_{(\phi,\psi)}\left(-\frac{s}{(1-x)^{\xi}}\right)~dx~,
	\end{equation}
	$$\left(\real(p)>0, \real(q)>0; \phi,\psi,\eta,\xi \in \mathbb{R}^{+}; r\ge0,~ s\ge0\right),$$
	
	where $\mathcal{E}_{\phi,\psi}(z)$ is the Mittag-Leffler function defined as \cite{a-wiman} :
	\begin{equation}\label{eq1.8}
		\mathcal{E}_{\phi,\psi}(z)=\sum_{k=0}^{\infty}\frac{z^k}{\Gamma(\phi k+\psi)},\qquad z\in \mathbb{C}.
	\end{equation}\\
	
	They also investigated a new kind of beta-logarithmic function using \eqref{eq1.5} and \eqref{eq1.7} as follows:
	$$\mathfrak{BL}_{(\phi,\psi)}^{(r,s,\eta,\xi)}(a,b;p,q)$$
	\begin{equation}\label{eq1.9}
		=\int_{0}^{1}a^{1-x}b^{x}x^{p-1}(1-x)^{q-1}\mathcal{E}_{(\phi,\psi)}\left(-\frac{r}{x^{\eta}}\right)\mathcal{E}_{(\phi,\psi)}\left(-\frac{s}{(1-x)^{\xi}}\right)~dx~,
	\end{equation}
	$$\left(\phi,\psi,\eta,\xi,a,b\in \mathbb{R}^{+};r,s\ge0;\real(p)>0,\real(q)>0\right).$$\\
	
	Inspired and motivated by the above certain extensions of the special matrix functions, we introduce a new matrix setting in the beta function and beta-logarithmic function defined in \eqref{eq1.7} and \eqref{eq1.9} respectively. We discussed some important properties of these extended matrix functions and investigated some numerical and graphical exploration of the extended beta-logarithmic matrix function to show the comparison with previously investigated results.

	\vspace{0.35cm}
	\section{\bf A Generalized Beta and Beta-Logarithmic Matrix Function}
	\bigskip
	
	\begin{definition}
		Let $ P, Q, R $ and $S \in \mathscr{M}_k,$ the generalized beta matrix function (GBMF) defined as:
		
		\begin{equation}\label{eq2.1}
			\mathfrak{B}_{(\phi,\psi)}^{(R,S,\eta,\xi)}(P,Q)=\int_{0}^{1}x^{P-I}(1-x)^{Q-I}\mathcal{E}_{(\phi,\psi)}\left(-\frac{R}{x^{\eta}}\right)\mathcal{E}_{(\phi,\psi)}\left(-\frac{S}{(1-x)^{\xi}}\right)~dx~,
		\end{equation}
		
		where the two-parameter Mittag-Leffler matrix function is defined \cite{gar-pop} as:
		\begin{equation*}
			\mathcal{E}_{\phi, \psi}(P)=\sum_{k=0}^{\infty}\frac{P^k}{\Gamma(\phi k+\psi)},
		\end{equation*}
		
		$$\left(\phi,\psi,\eta,\xi \in \mathbb{R}^{+} \right).$$
		
	\end{definition}

	\subsection*{Remark 2.1}
	a. If we put $\phi=\psi=\eta=\xi=1$ and $R=S$, then \eqref{eq2.1} reduces to \eqref{eq1.4}, which is the extended beta matrix function defined in \cite{ab-ba}:
	\begin{equation*}
		\mathfrak{B}_{(1,1)}^{(R,R,1,1)}(P,Q)=\mathfrak{B}^{R}(P,Q) .
	\end{equation*}

	b. If we choose  $\phi=\psi=\eta=\xi=1$ and $R=S=0_k$ in \eqref{eq2.1}, we get \eqref{eq1.3}, which is the matrix form of classical Euler beta function defined in \cite{jod-cor}:
	\begin{equation*}
		\mathfrak{B}_{(1,1)}^{(0,0,1,1)}(P,Q)=\mathfrak{B}(P,Q).
	\end{equation*}\\
	Now, using the beta function \eqref{eq2.1} and logarithmic mean \eqref{eq1.5}, we introduce the matrix setting of \eqref{eq1.9} and defined the generalized beta-logarithmic matrix function (GBLMF) in the following definition.
	\begin{definition}
		Let $ P, Q, R $ and $S \in \mathscr{M}_k,$ the generalized beta-logarithmic matrix function (GBLMF) defined as:
		$$\mathfrak{BL}_{(\phi,\psi)}^{(R,S,\eta,\xi)}(a,b;P,Q)$$
		\begin{equation}\label{eq2.2}
			=\int_{0}^{1}a^{1-x}b^{x}x^{P-I}(1-x)^{Q-I}\mathcal{E}_{(\phi,\psi)}\left(-\frac{R}{x^{\eta}}\right)\mathcal{E}_{(\phi,\psi)}\left(-\frac{S}{(1-x)^{\xi}}\right)~dx~,
		\end{equation}
	\end{definition}
	
	$$\left(\phi,\psi,\eta,\xi,a,b\in \mathbb{R}^{+}\right).$$
	
	\subsection*{Remark 2.2}
	a. If we put $\phi=\psi=\eta=\xi=1$ and $R=S$, then \eqref{eq2.2} reduces to \eqref{eq1.6}, which is the extended beta-logarithmic matrix function defined in \cite{alq}:
	\begin{equation*}
		\mathfrak{BL}_{(1,1)}^{(R,R,1,1)}(a,b;P,Q)=\mathfrak{BL}^{R}(a,b;P,Q) .
	\end{equation*}
	b. Substituting $a=b=1$ in \eqref{eq2.2}, we get the extended beta matrix function \eqref{eq2.1}:
	\begin{equation*}
		\mathfrak{BL}_{(\phi,\psi)}^{(R,S,\eta,\xi)}(1,1;P,Q)=\mathfrak{B}_{(\phi,\psi)}^{(R,S,\eta,\xi)}(P,Q) .
	\end{equation*}
	c. If we take $a=b=\phi=\psi=\eta=\xi=1$ ; $R=S$ in \eqref{eq2.2} we get \eqref{eq1.4}, the extended beta matrix function introduced by Abdalla et al. \cite{ab-ba} :
	\begin{equation*}
		\mathfrak{BL}_{(1,1)}^{(R,R,1,1)}(1,1;P,Q)=\mathfrak{B}^{R}(P,Q).
	\end{equation*}
	d. If we choose  $a=b=\phi=\psi=\eta=\xi=1$ and $A=B=0_k$ in \eqref{eq2.2}, we get \eqref{eq1.3}, which is the matrix form of classical Euler beta function defined in \cite{jod-cor}:
	\begin{equation*}
		\mathfrak{BL}_{(1,1)}^{(0,0,1,1)}(1,1;P,Q)=\mathfrak{B}(P,Q).
	\end{equation*}
	e. If we set $P=Q=I_k$; $\phi=\psi=\eta=\xi=1$ ; $R=S=0_k$ in \eqref{eq2.2}, we obtain logarithmic mean \eqref{eq1.5}:
	\begin{equation*}
		\mathfrak{BL}_{(1,1)}^{(0,0,1,1)}(a,b;I,I)=\mathfrak{L}(a,b).
	\end{equation*}\\
	
	\vspace{0.35cm}
	\section{\bf Properties of Beta-Logarithmic Matrix Function}
	In this section, we obtain some different properties and representations of the generalized beta-logarithmic matrix function \eqref{eq2.2}.
	\subsection*{Proposition 3.1.} For any $\phi,\psi,a,b >0;$ $P,Q,R,S\in \mathscr{M}_k$, the GBLMF \eqref{eq2.2} holds the following properties :
	\begin{equation}\label{eq3.1}
		\mathfrak{BL}_{(\phi,\psi)}^{(R,S,\eta,\xi)}(a,b;P,Q)=\mathfrak{BL}_{(\phi,\psi)}^{(S,R,\xi,\eta)}(b,a;Q,P),
	\end{equation}
	\begin{equation}\label{eq3.2}
		\mathfrak{BL}_{(\phi,\psi)}^{(R,S,\eta,\xi)}(a,a;P,Q)=a\mathfrak{B}_{(\phi,\psi)}^{(R,S,\eta,\xi)}(P,Q),
	\end{equation}
	and
	\begin{equation}\label{eq3.3}
		\mathfrak{BL}_{(\phi,\psi)}^{(R,S,\eta,\xi)}(\xi a,\xi b;P,Q)=\xi\mathfrak{BL}_{(\phi,\psi)}^{(R,S,\eta,\xi)}(a,b;P,Q).
	\end{equation}
	\begin{proof}
		By changing the variable $x$ into $(1-y)$ in \eqref{eq2.2} and after simplification, we get the result \eqref{eq3.1}. Also, the assertions \eqref{eq3.2} and \eqref{eq3.3}  obtained by simple calculation in \eqref{eq2.2}.
	\end{proof}
	
	\subsection*{Proposition 3.2.}For $\phi,\psi,a,b>0$; $P,Q,R,S\in \mathscr{M}_k$,  the following assertions holds true for the GBLMF \eqref{eq2.2} :
	\begin{equation}\label{eq3.4}
		\mathfrak{BL}_{(\phi,\psi)}^{(R,S,\eta,\xi)}(a,b;P+I,Q)+\mathfrak{BL}_{(\phi,\psi)}^{(R,S,\eta,\xi)}(a,b;P,Q+I)=\mathfrak{BL}_{(\phi,\psi)}^{(R,S,\eta,\xi)}(a,b;P,Q).
	\end{equation}
	\begin{proof}
		Apply \eqref{eq2.2} to the left side of \eqref{eq3.4} and simplify  we get right side of required result \eqref{eq3.4}.
	\end{proof}
	\subsection*{Corollary 3.1} If we substitute $a=b=1$ in \eqref{eq3.4}, we obtain the following result :
	\begin{equation}\label{eq3.5}
		\mathfrak{B}_{(\phi,\psi)}^{(R,S,\eta,\xi)}(P+I,Q)+\mathfrak{B}_{(\phi,\psi)}^{(R,S,\eta,\xi)}(P,Q+I)=\mathfrak{B}_{(\phi,\psi)}^{(R,S,\eta,\xi)}(P,Q).
	\end{equation}
	\subsection*{Corollary 3.2} If we substitute $R=S$; $\phi=\psi=\eta=\xi=1$ in \eqref{eq3.4}, we obtain the following result :
	\begin{equation}\label{eq3.6}
		\mathfrak{BL}^{R}(a,b;P+I,Q)+\mathfrak{BL}^{R}(a,b;P,Q+I)=\mathfrak{BL}^{R}(a,b;P,Q).
	\end{equation}
	\subsection*{Corollary 3.3} If we substitute $R=S=0_k$; $\phi=\psi=\eta=\xi=1$ in \eqref{eq3.4}, we obtain the following result :
	\begin{equation}\label{eq3.7}
		\mathfrak{BL}(a,b;P+I,Q)+\mathfrak{BL}(a,b;P,Q+I)=\mathfrak{BL}(a,b;P,Q).
	\end{equation}
	\subsection*{Corollary 3.4} If we substitute $R=S=0_k$; $\phi=\psi=\eta=\xi=1$; $a=b=1$ in \eqref{eq3.4}, we obtain the following result :
	\begin{equation}\label{eq3.8}
		\mathfrak{B}(P+I,Q)+\mathfrak{B}(P,Q+I)=\mathfrak{B}(P,Q).
	\end{equation}\\
	The following theorem delivers a bound of the function given in \eqref{eq2.2}.
	\begin{theorem}\label{thm3.1}
		For any $\phi,\psi,a,b>0$; $P,Q,R,S \in \mathscr{M}_k$, the beta-logarithmic matrix function holds the following inequality :

		\begin{equation}\label{eq3.9}
			\begin{aligned}
				\min(a,b)\left\Vert\mathfrak{B}_{(\phi,\psi)}^{(R,S,\eta,\xi)}(P,Q)\right\Vert\le\left\Vert \mathfrak{BL}_{(\phi,\psi)}^{(R,S,\eta,\xi)}(a,b;P,Q)\right\Vert\le 
				\\ \max(a,b) \left\Vert\mathfrak{B}_{(\phi,\psi)}^{(R,S,\eta,\xi)}(P,Q)\right\Vert.
			\end{aligned}
		\end{equation}

		\begin{proof}
			Here, using the well known inequality \\
			
			$\min(a,b)\le\sqrt{ab}\le\mathfrak{L}(a,b)\le(\frac{a+b}{2})\le\max(a,b)$
			and $\left\Vert\mathfrak{B}_{(\phi,\psi)}^{(R,S,\eta,\xi)}(P,Q)\right\Vert>0$\\
			
			we obtain the inequality
			\begin{equation}\label{eq3.10}
				\min(a,b)\left\Vert\mathfrak{B}_{(\phi,\psi)}^{(R,S,\eta,\xi)}(P,Q)\right\Vert\le\left\Vert \mathfrak{BL}_{(\phi,\psi)}^{(R,S,\eta,\xi)}(a,b;P,Q)\right\Vert.
			\end{equation}
			Now, apply the Young's inequality
			$a^{1-x}b^{x}\le a(1-x)+bx\quad \forall~ x\in[0,1]$ in the definition \eqref{eq2.2} of right side \eqref{eq3.10}, we get
			\begin{equation*}
				\begin{aligned}
					&\min(a,b)\left\Vert\mathfrak{B}_{(\phi,\psi)}^{(R,S,\eta,\xi)}(P,Q)\right\Vert\le\left\Vert \mathfrak{BL}_{(\phi,\psi)}^{(R,S,\eta,\xi)}(a,b;P,Q)\right\Vert\\\le &a\left\Vert\mathfrak{B}_{(\phi,\psi)}^{(R,S,\eta,\xi)}(P,Q+I)\right\Vert+b \left\Vert\mathfrak{B}_{(\phi,\psi)}^{(R,S,\eta,\xi)}(P+I,Q)\right\Vert\\
					&\le\max(a,b)\left\Vert\left[\mathfrak{B}_{(\phi,\psi)}^{(R,S,\eta,\xi)}(P,Q+I)+\mathfrak{B}_{(\phi,\psi)}^{(R,S,\eta,\xi)}(P+I,Q)\right]\right\Vert.
				\end{aligned}
			\end{equation*}
			Using \eqref{eq3.5} in above expression, we get our required result \eqref{eq3.9}.
		\end{proof}
	\end{theorem}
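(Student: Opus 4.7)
The plan is to bracket the weighted geometric mean $a^{1-x}b^x$ that appears in the integrand of \eqref{eq2.2} between convenient pointwise bounds, and then move those bounds outside the integral. For any $x\in[0,1]$ and $a,b>0$, one has the chain
$$\min(a,b)\le a^{1-x}b^x\le (1-x)a+xb\le \max(a,b),$$
where the middle step is the weighted AM--GM (Young) inequality. I would open the proof by recording this chain, since both halves of \eqref{eq3.9} will flow from it.

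For the lower estimate, I would substitute the constant $\min(a,b)$ for $a^{1-x}b^x$ inside \eqref{eq2.2} and factor the nonnegative scalar out of the integral. The remaining integrand is exactly the definition of $\mathfrak{B}_{(\phi,\psi)}^{(R,S,\eta,\xi)}(P,Q)$ from \eqref{eq2.1}, so after passing to norms one obtains the left inequality in \eqref{eq3.9} immediately.

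For the upper estimate, I would replace $a^{1-x}b^x$ by its Young majorant $(1-x)a+xb$ in \eqref{eq2.2}. The factors $(1-x)$ and $x$ combine with $(1-x)^{Q-I}$ and $x^{P-I}$, respectively, so the integral splits as
$$a\,\mathfrak{B}_{(\phi,\psi)}^{(R,S,\eta,\xi)}(P,Q+I)+b\,\mathfrak{B}_{(\phi,\psi)}^{(R,S,\eta,\xi)}(P+I,Q).$$
The key move is then to invoke the recurrence \eqref{eq3.5} from Corollary 3.1 at the matrix level to rewrite the sum $\mathfrak{B}(P,Q+I)+\mathfrak{B}(P+I,Q)$ as $\mathfrak{B}(P,Q)$, and only afterwards take norms; bounding the scalar coefficients by $\max(a,b)$ then yields the right inequality in \eqref{eq3.9}.

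The step I expect to be the main obstacle is precisely that passage from $a\,\|\mathfrak{B}(P,Q+I)\|+b\,\|\mathfrak{B}(P+I,Q)\|$ to $\max(a,b)\,\|\mathfrak{B}(P,Q)\|$: written in that order one would need $\|A\|+\|B\|\le\|A+B\|$, which runs against the triangle inequality. Performing the recurrence \eqref{eq3.5} before applying $\|\cdot\|$ (rather than after) cleanly avoids this reversal, and I would emphasize this ordering in the write-up to keep the norm manipulations rigorous.
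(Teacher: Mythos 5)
Your proposal follows the same two--pronged strategy as the paper's own proof: the lower bound comes from the pointwise estimate $\min(a,b)\le a^{1-x}b^{x}$ applied inside the integrand of \eqref{eq2.2}, and the upper bound from the Young majorant $(1-x)a+xb$ together with the recurrence \eqref{eq3.5}. The one place where you deviate is precisely the place where the paper's write-up is flawed: the paper passes to norms first, reaching $a\left\Vert\mathfrak{B}_{(\phi,\psi)}^{(R,S,\eta,\xi)}(P,Q+I)\right\Vert+b\left\Vert\mathfrak{B}_{(\phi,\psi)}^{(R,S,\eta,\xi)}(P+I,Q)\right\Vert$, and then asserts that this is bounded by $\max(a,b)\left\Vert\mathfrak{B}_{(\phi,\psi)}^{(R,S,\eta,\xi)}(P,Q+I)+\mathfrak{B}_{(\phi,\psi)}^{(R,S,\eta,\xi)}(P+I,Q)\right\Vert$ --- exactly the step you flagged, which would require $\Vert A\Vert+\Vert B\Vert\le\Vert A+B\Vert$ and so runs against the triangle inequality. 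Your decision to apply the recurrence \eqref{eq3.5} at the matrix level and only then take norms is the correct repair, and you identified the obstruction without having seen the paper's argument.

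One caveat applies to your version as well as the paper's. Every step that converts a pointwise scalar inequality into an inequality between \emph{norms} of matrix-valued integrals --- for instance $\min(a,b)\left\Vert\int_{0}^{1}F(x)\,dx\right\Vert\le\left\Vert\int_{0}^{1}a^{1-x}b^{x}F(x)\,dx\right\Vert$ for the lower bound, the replacement of $a^{1-x}b^{x}$ by $(1-x)a+xb$ under the norm for the upper bound, and the final estimate $\Vert aA+bB\Vert\le\max(a,b)\Vert A+B\Vert$ --- tacitly assumes that the matrix integrand $F(x)=x^{P-I}(1-x)^{Q-I}\mathcal{E}_{(\phi,\psi)}\left(-R x^{-\eta}\right)\mathcal{E}_{(\phi,\psi)}\left(-S(1-x)^{-\xi}\right)$ is positive semidefinite for each $x$, so that the Loewner order is available and the spectral norm is monotone with respect to it. For non-commuting $P,Q,R,S$ this product need not even be Hermitian (the paper's own numerical examples show the symmetry of the function failing for non-commuting data), so the hypothesis is genuinely needed and should be stated explicitly. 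Within that implicit positivity framework your ordering of the steps is the rigorous one; without it, neither proof closes.
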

	\subsection*{Corollary 3.5} If we substitute $R=S$; $\phi=\psi=\eta=\xi=1$ in \eqref{eq3.9}, we obtain the following result :
	\begin{equation}\label{eq3.11}
		\min(a,b)\left\Vert\mathfrak{B}^{R}(P,Q)\right\Vert\le\left\Vert \mathfrak{BL}^{R}(a,b;P,Q)\right\Vert\\\le \max(a,b)\left\Vert\mathfrak{B}^{R}(P,Q)\right\Vert.
	\end{equation}
	\subsection*{Corollary 3.6} If we substitute $R=S=0_k$; $\phi=\psi=\eta=\xi=1$ in \eqref{eq3.9}, we obtain the following result :
	\begin{equation}\label{eq3.12}
		\min(a,b)\left\Vert\mathfrak{B}(P,Q)\right\Vert\le\left\Vert \mathfrak{BL}(a,b;P,Q)\right\Vert\\\le \max(a,b)\left\Vert\mathfrak{B}(P,Q)\right\Vert.
	\end{equation}
	\subsection*{Corollary 3.7} For $a=b=\phi=\psi=\eta=\xi=1;$ $R=S,$ let $R,P$ and $Q$ be reciprocally commutative matrices in $\mathscr{M}_k$, then \eqref{eq3.9} reduces to the following result:
	\begin{equation}\label{eq3.13}
		\left\Vert\mathfrak{B}^{R}(P,Q)\right\Vert\le exp(-4\left\Vert R\right\Vert)\left\Vert \mathfrak{B}(P,Q)\right\Vert.
	\end{equation}
	The following theorems deliver infinite sums of the function given in \eqref{eq2.2}.
	\begin{theorem}\label{thm3.2}
		Let $\phi,\psi,a,b>0$ and $P,Q,R$ and $S\in \mathscr{M}_k$, the following summation relations hold true :
		\begin{equation}\label{eq3.14}
			\mathfrak{BL}_{(\phi,\psi)}^{(R,S,\eta,\xi)}(a,b;P,Q)=\sum_{k=0}^{\infty}\mathfrak{BL}_{(\phi,\psi)}^{(R,S,\eta,\xi)}(a,b;P+I,Q+kI),
		\end{equation} and
		\begin{equation}\label{eq3.15}
			\mathfrak{BL}_{(\phi,\psi)}^{(R,S,\eta,\xi)}(a,b;P,Q)=\sum_{k=0}^{\infty}\mathfrak{BL}_{(\phi,\psi)}^{(R,S,\eta,\xi)}(a,b;P+kI,Q+I).
		\end{equation}
		\begin{proof}
			We know the power series representation 
			\begin{equation}\label{eq3.16}
				x^{-I}=exp(\ln(x^{-I}))=I\sum_{k=0}^{\infty}(1-x)^{k}\quad \forall~ x\in (0,1).
			\end{equation}
			Now, using \eqref{eq2.2}, we get
			$$\mathfrak{BL}_{(\phi,\psi)}^{(R,S,\eta,\xi)}(a,b;P,Q)$$
			\begin{equation*}
				\begin{aligned}
					=&\int_{0}^{1}a^{1-x}b^{x}x^{P-I}(1-x)^{Q-I}\mathcal{E}_{(\phi,\psi)}\left(-\frac{R}{x^{\eta}}\right)\mathcal{E}_{(\phi,\psi)}\left(-\frac{S}{(1-x)^{\xi}}\right)~dx\\
					=&\int_{0}^{1}a^{1-x}b^{x}x^{P}x^{-I}(1-x)^{Q-I}\mathcal{E}_{(\phi,\psi)}\left(-\frac{R}{x^{\eta}}\right)\mathcal{E}_{(\phi,\psi)}\left(-\frac{S}{(1-x)^{\xi}}\right)~dx,
				\end{aligned}
			\end{equation*}
			applying \eqref{eq3.16} in above expression and after simplifying we achieved our required result \eqref{eq3.14}.
			Similarly, the power series representation 
			\begin{equation}\label{eq3.17}
				(1-x)^{-I}=exp(\ln((1-x)^{-I}))=I\sum_{k=0}^{\infty}x^{k}\quad \forall~ x\in (0,1).
			\end{equation}
			Applying \eqref{eq3.17} in \eqref{eq2.2} and after simplifying we achieved our required result \eqref{eq3.15}.
		\end{proof}
	\end{theorem}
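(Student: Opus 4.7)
My plan is to prove the two identities by factoring off one power of the matrix from the integrand of \eqref{eq2.2} and expanding the remaining negative-identity power as a geometric series in the matrix sense, exactly as foreshadowed by the power-series identities \eqref{eq3.16} and \eqref{eq3.17}. For \eqref{eq3.14}, I would write $x^{P-I}=x^{P}\,x^{-I}$ inside the integrand of \eqref{eq2.2}; since $P$ and the scalar $I$ commute, this splitting is legitimate and yields
\begin{equation*}
\mathfrak{BL}_{(\phi,\psi)}^{(R,S,\eta,\xi)}(a,b;P,Q)=\int_{0}^{1}a^{1-x}b^{x}x^{P}\,x^{-I}\,(1-x)^{Q-I}\,\mathcal{E}_{(\phi,\psi)}\!\left(-\tfrac{R}{x^{\eta}}\right)\mathcal{E}_{(\phi,\psi)}\!\left(-\tfrac{S}{(1-x)^{\xi}}\right)dx.
\end{equation*}

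Next I would substitute the geometric expansion $x^{-I}=I\sum_{k=0}^{\infty}(1-x)^{k}$ from \eqref{eq3.16}, valid for $x\in(0,1)$, combine $(1-x)^{k}$ with the factor $(1-x)^{Q-I}$ to obtain $(1-x)^{(Q+kI)-I}$, and similarly recognize $x^{P}=x^{(P+I)-I}$. Interchanging the summation with the integral then produces exactly the series
\begin{equation*}
\sum_{k=0}^{\infty}\int_{0}^{1}a^{1-x}b^{x}x^{(P+I)-I}(1-x)^{(Q+kI)-I}\mathcal{E}_{(\phi,\psi)}\!\left(-\tfrac{R}{x^{\eta}}\right)\mathcal{E}_{(\phi,\psi)}\!\left(-\tfrac{S}{(1-x)^{\xi}}\right)dx,
\end{equation*}
each term of which is, by Definition \eqref{eq2.2}, the value $\mathfrak{BL}_{(\phi,\psi)}^{(R,S,\eta,\xi)}(a,b;P+I,Q+kI)$. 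This yields \eqref{eq3.14}. The proof of \eqref{eq3.15} runs in parallel: factor $(1-x)^{Q-I}=(1-x)^{Q}(1-x)^{-I}$, apply the dual expansion $(1-x)^{-I}=I\sum_{k=0}^{\infty}x^{k}$ from \eqref{eq3.17}, absorb $x^{k}$ into $x^{P-I}$ to get $x^{(P+kI)-I}$, and identify the resulting integrals.

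The step I expect to require the most care is the interchange of the infinite sum and the integral, since the truncated partial sums of $\sum(1-x)^{k}$ grow without bound as $x\to 0^{+}$, and the series only converges pointwise on $(0,1)$. The clean justification is to note that the positive stability of $P$ ensures the integrability of $x^{P-I}$ near zero (which is precisely the condition that makes the original definition \eqref{eq2.2} finite), so by a monotone convergence or dominated convergence argument applied to the nonnegative scalar majorant obtained by taking norms, the interchange is valid. Once this is in place, the remaining manipulations are algebraic and follow directly from the definition.
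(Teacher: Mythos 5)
Your proposal follows essentially the same route as the paper's own proof: factoring $x^{P-I}=x^{P}x^{-I}$ (respectively $(1-x)^{Q-I}=(1-x)^{Q}(1-x)^{-I}$), expanding the negative-identity power as the geometric series \eqref{eq3.16} (respectively \eqref{eq3.17}), absorbing the resulting scalar powers into the matrix exponents, and interchanging sum and integral to identify each term via Definition \eqref{eq2.2}. Your added remark justifying the sum--integral interchange via positive stability of $P$ and a dominated/monotone convergence argument is a point the paper leaves implicit, but it does not change the method.
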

	\subsection*{Corollary 3.8}
	By setting $a=b=1$ in \eqref{eq3.14} and \eqref{eq3.15}, we can obtain the infinite sums representations of the function defined in \eqref{eq2.1} as followings:
	\begin{equation}\label{eq3.18}
		\mathfrak{B}_{(\phi,\psi)}^{(R,S,\eta,\xi)}(P,Q)=\sum_{k=0}^{\infty}\mathfrak{B}_{(\phi,\psi)}^{(R,S,\eta,\xi)}(P+I,Q+kI),
	\end{equation} and
	\begin{equation}\label{eq3.19}
		\mathfrak{B}_{(\phi,\psi)}^{(R,S,\eta,\xi)}(P,Q)=\sum_{k=0}^{\infty}\mathfrak{B}_{(\phi,\psi)}^{(R,S,\eta,\xi)}(P+kI,Q+I).
	\end{equation}
	
	\begin{theorem}\label{thm3.3}
		For any  $\phi,\psi,a,b>0$ and $P,Q,R,S \in \mathscr{M}_k$, the following representation holds true :
		\begin{equation}\label{eq3.20}
			\mathfrak{BL}_{(\phi,\psi)}^{(R,S,\eta,\xi)}(a,b;P,Q)=\sum_{k,r=0}^{\infty}\frac{\mathfrak{B}_{(\phi,\psi)}^{(R,S,\eta,\xi)}(P+kI,Q+rI)}{k!r!}\left(\log(a)\right)^{r}\left(\log(b)\right)^{k}.
		\end{equation}
		\begin{proof}
			From \eqref{eq2.2}, we get 
			$$\mathfrak{BL}_{(\phi,\psi)}^{(R,S,\eta,\xi)}(a,b;P,Q)$$
			\begin{equation*}
				=\int_{0}^{1}a^{1-x}b^{x}x^{P-I}(1-x)^{Q-I}\mathcal{E}_{(\phi,\psi)}\left(-\frac{R}{x^{\eta}}\right)\mathcal{E}_{(\phi,\psi)}\left(-\frac{S}{(1-x)^{\xi}}\right)~dx~.
			\end{equation*}
			Now, the power series expansion 
			\begin{equation}\label{eq3.21}
				a^{1-x}=\sum_{r=0}^{\infty}\frac{\left(\log(a)\right)^{r}}{r!}(1-x)^{r}
			\end{equation} 
			and 
			\begin{equation}\label{eq3.22}
				b^{x}=\sum_{k=0}^{\infty}\frac{\left(\log(b)\right)^{k}}{k!}(x)^{k},
			\end{equation}
			using \eqref{eq3.21} and \eqref{eq3.22} in the above expansion and applying \eqref{eq2.2}, we get the desired result \eqref{eq3.20}. 
		\end{proof}
	\end{theorem}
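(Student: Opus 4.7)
My plan is to expand the scalar exponentials $a^{1-x}$ and $b^{x}$ as power series in $(1-x)$ and $x$ respectively, substitute the resulting double series into the defining integral \eqref{eq2.2}, interchange the double summation with the integral, and then recognize each remaining integral as a shifted instance of the generalized beta matrix function \eqref{eq2.1}.

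More explicitly, combining \eqref{eq3.21} and \eqref{eq3.22} gives the product representation
\begin{equation*}
a^{1-x}b^{x}=\sum_{r=0}^{\infty}\sum_{k=0}^{\infty}\frac{(\log a)^{r}(\log b)^{k}}{r!\,k!}(1-x)^{r}x^{k}.
\end{equation*}
Since $kI$ and $rI$ are scalar multiples of the identity, they commute with $P-I$ and $Q-I$, so after moving the double sum outside the integral the factors $x^{k}$ and $(1-x)^{r}$ merge cleanly with $x^{P-I}(1-x)^{Q-I}$ to form $x^{(P+kI)-I}(1-x)^{(Q+rI)-I}$. The resulting integrand is precisely the integrand of \eqref{eq2.1} with $P$ and $Q$ replaced by $P+kI$ and $Q+rI$, whose value is $\mathfrak{B}_{(\phi,\psi)}^{(R,S,\eta,\xi)}(P+kI,Q+rI)$; assembling the factors then produces \eqref{eq3.20}.

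The principal technical point is justifying the term-by-term integration. For fixed $a,b>0$, the scalar majorant series $\sum_{r}\frac{|\log a|^{r}}{r!}(1-x)^{r}$ and $\sum_{k}\frac{|\log b|^{k}}{k!}x^{k}$ converge uniformly on $[0,1]$ to $e^{|\log a|(1-x)}$ and $e^{|\log b|x}$, both of which are bounded by a constant $M(a,b)$. The matrix-valued factor $x^{P-I}(1-x)^{Q-I}\mathcal{E}_{(\phi,\psi)}(-R/x^{\eta})\mathcal{E}_{(\phi,\psi)}(-S/(1-x)^{\xi})$ is integrable in norm on $(0,1)$ by the very hypothesis that \eqref{eq2.2} defines a convergent matrix integral, so $M(a,b)$ times its norm furnishes an integrable majorant for the partial sums. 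Fubini--Tonelli (or dominated convergence applied to the partial sums) then validates the interchange, and a routine reindexing completes the proof.
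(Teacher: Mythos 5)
Your proposal follows essentially the same route as the paper: expand $a^{1-x}$ and $b^{x}$ via \eqref{eq3.21} and \eqref{eq3.22}, interchange the double sum with the integral, and absorb $x^{k}(1-x)^{r}$ into the powers to recognize $\mathfrak{B}_{(\phi,\psi)}^{(R,S,\eta,\xi)}(P+kI,Q+rI)$. The only difference is that you explicitly justify the term-by-term integration with a dominated-convergence argument, which the paper's proof leaves implicit; this is a welcome addition but not a different method.
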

	The following theorem delivers a finite sums of the function given in \eqref{eq2.2}.
	\begin{theorem}\label{thm3.4}
			For any  $\phi,\psi,a,b,\eta,\xi>0$ and $P,Q,R,S \in \mathscr{M}_k$, the following  finite sums relation holds true :
		\begin{equation}\label{eq3.23}
			\mathfrak{BL}_{(\phi,\psi)}^{(R,S,\eta,\xi)}(a,b;P,Q)=\sum_{k=0}^{m} \binom{m}{k}\mathfrak{BL}_{(\phi,\psi)}^{(R,S,\eta,\xi)}(a,b;P+kI,Q+(m-k)I).
		\end{equation} 
	\end{theorem}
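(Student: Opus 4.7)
The plan is to realize the identity by inserting the binomial expansion of $1=(x+(1-x))^m$ into the integrand of \eqref{eq2.2} and then recognizing each resulting integral as a shifted copy of the GBLMF. This is the same mechanism that, in the scalar case, converts $\mathfrak{B}(p,q)$ into $\sum_{k=0}^m\binom{m}{k}\mathfrak{B}(p+k,q+m-k)$, so the only issue is verifying that the matrix-exponent arithmetic goes through cleanly.

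First I would start from the definition \eqref{eq2.2} and insert the scalar identity
\begin{equation*}
1=(x+(1-x))^m=\sum_{k=0}^{m}\binom{m}{k}x^k(1-x)^{m-k},\qquad x\in(0,1),
\end{equation*}
into the integrand. Since the sum is finite, I can pull it outside the integral without any convergence concern, giving
\begin{equation*}
\mathfrak{BL}_{(\phi,\psi)}^{(R,S,\eta,\xi)}(a,b;P,Q)=\sum_{k=0}^{m}\binom{m}{k}\int_{0}^{1}a^{1-x}b^{x}x^{k}(1-x)^{m-k}x^{P-I}(1-x)^{Q-I}\mathcal{E}_{(\phi,\psi)}\!\left(-\tfrac{R}{x^{\eta}}\right)\mathcal{E}_{(\phi,\psi)}\!\left(-\tfrac{S}{(1-x)^{\xi}}\right)dx.
\end{equation*}

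Next I would merge the scalar powers with the matrix powers. Because the scalar $x^k$ equals $x^{kI}$ acting as a scalar multiple of the identity (which commutes with every matrix, in particular with $x^{P-I}$), one has $x^{k}\,x^{P-I}=x^{P+(k-1)I}=x^{(P+kI)-I}$, and similarly $(1-x)^{m-k}(1-x)^{Q-I}=(1-x)^{(Q+(m-k)I)-I}$. Substituting these identities term by term and comparing with \eqref{eq2.2} shows that the $k$-th integral is exactly $\mathfrak{BL}_{(\phi,\psi)}^{(R,S,\eta,\xi)}(a,b;P+kI,Q+(m-k)I)$, which yields \eqref{eq3.23}.

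There is no real obstacle here beyond the matrix-exponent bookkeeping in the previous paragraph; the positivity hypotheses $\phi,\psi,a,b,\eta,\xi>0$ and $P,Q,R,S\in\mathscr{M}_k$ guarantee the integrand is well defined and the Mittag-Leffler factors and powers of $x,(1-x)$ behave as they do in \eqref{eq2.2}, so all manipulations used to prove \eqref{eq3.14} and \eqref{eq3.20} apply verbatim. As sanity checks I would verify the edge cases $m=0$ (giving the trivial identity) and $m=1$ (recovering Proposition 3.2, equation \eqref{eq3.4}), which confirms that the binomial-expansion argument specializes correctly.
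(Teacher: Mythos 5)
Your proof is correct, but it takes a different route from the paper. The paper proves \eqref{eq3.23} by iterating the contiguous relation \eqref{eq3.4}: it applies $\mathfrak{BL}(a,b;P,Q)=\mathfrak{BL}(a,b;P+I,Q)+\mathfrak{BL}(a,b;P,Q+I)$ repeatedly to each term on the right-hand side and concludes by mathematical induction on $m$, so the Pascal-triangle structure of the binomial coefficients emerges from the repeated splitting. You instead go back to the integral definition \eqref{eq2.2} and insert the expansion $1=(x+(1-x))^m=\sum_{k=0}^{m}\binom{m}{k}x^k(1-x)^{m-k}$ in a single step, which gives the full identity at once with no induction; the only bookkeeping is the observation that $x^k x^{P-I}=x^{(P+kI)-I}$, which is valid since scalar multiples of $I$ commute with every matrix power. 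The two mechanisms are really the same idea at different levels of granularity — \eqref{eq3.4} is exactly the $m=1$ case of your expansion — but your version is self-contained and avoids the inductive step, while the paper's version reuses Proposition 3.2 and keeps the argument at the level of functional equations rather than integrals. Your sanity checks at $m=0$ and $m=1$ are apt and confirm the specialization to \eqref{eq3.4}.
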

	\begin{proof}
		From \eqref{eq3.4} we have the following,
		\begin{equation*}
			\mathfrak{BL}_{(\phi,\psi)}^{(R,S,\eta,\xi)}(a,b;P,Q)=	\mathfrak{BL}_{(\phi,\psi)}^{(R,S,\eta,\xi)}(a,b;P+I,Q)+\mathfrak{BL}_{(\phi,\psi)}^{(R,S,\eta,\xi)}(a,b;P,Q+I)
		\end{equation*}
		Then, we apply the same result of \eqref{eq3.4} on each term of the right hand side of the above equation. We obtain the following,
		\begin{equation*}
			\begin{aligned}
				\mathfrak{BL}_{(\phi,\psi)}^{(R,S,\eta,\xi)}(a,b;P,Q)=\mathfrak{BL}_{(\phi,\psi)}^{(R,S,\eta,\xi)}(a,b;P+2I,Q)+\\ 2\mathfrak{BL}_{(\phi,\psi)}^{(R,S,\eta,\xi)}(a,b;P+I,Q+I)+\mathfrak{BL}_{(\phi,\psi)}^{(R,S,\eta,\xi)}(a,b;P,Q+2I).
			\end{aligned}
		\end{equation*}
		Upon continuing the same process on the right hand side of the above equation and applying mathematical induction, we obtain \eqref{eq3.23}.
	\end{proof}
	\subsection*{Corollary3.9}
	By putting $a=b=1$ in \eqref{eq3.23} we obtain finite sum representations of the function given in \eqref{eq2.1} as
	\begin{equation}
		\mathfrak{B}_{(\phi,\psi)}^{(R,S,\eta,\xi)}(P,Q)=\sum_{k=0}^{m} \binom{m}{k}\mathfrak{B}_{(\phi,\psi)}^{(R,S,\eta,\xi)}(P+kI,Q+(m-k)I).
	\end{equation}
	The following theorem gives various integral representations of the function given in \eqref{eq2.2}.
	\begin{theorem}\label{thm3.5}
		For any  $\phi,\psi,a,b,\eta,\xi>0$ and $P,Q,R,S \in \mathscr{M}_k$, the generalized beta-logarithmic matrix function \eqref{eq2.2} holds the following relations:
		
		$$\mathfrak{BL}_{(\phi,\psi)}^{(R,S,\eta,\xi)}(a,b;P, Q)$$
		\begin{equation}\label{eq3.25}
			=2a\int_{0}^{\frac{\pi}{2}}~~\left(\frac{b}{a}\right)^{\sin^{2}(\theta)}{\sin^{2P-I}(\theta)\cos^{2Q-I}(\theta)}\mathcal{E}_{\phi,\psi}(-R\csc^{2\eta}(\theta))\mathcal{E}_{\phi,\psi}(-S\sec^{2\xi}(\theta)) d{\theta},
		\end{equation}
		$$\mathfrak{BL}_{(\phi,\psi)}^{(R,S,\eta,\xi)}(a,b;P, Q)$$
		\begin{equation}\label{eq3.26}
			=\int_{0}^{\infty}~~\frac{t^{P-I}}{(1+t)^{P+Q}}a^\frac{1}{1+t}b^\frac{t}{1+t}\mathcal{E}_{\phi,\psi}(-R(1+t^{-1})^{\eta})\mathcal{E}_{\phi,\psi}(-S(1+t)^{\xi}) dt,
		\end{equation}
		and
		$$\mathfrak{BL}_{(\phi,\psi)}^{(R,S,\eta,\xi)}(a,b;P, Q)$$
		\begin{equation}\label{eq3.27}
			=\frac{2^I\sqrt{ab}}{2^{P+Q}}\int_{-1}^{1}~~(1+t)^{P-I}(1-t)^{Q-I}\left(\frac{b}{a}\right)^{\frac{t}{2}}\mathcal{E}_{\phi,\psi}(-R(\frac{1+t}{2})^{-\eta})\mathcal{E}_{\phi,\psi}(-S(\frac{1-t}{2})^{-\xi}) dt.
		\end{equation}
		\begin{proof}
			Setting $x=\sin^{2}(\theta)$ in \eqref{eq2.2} yeilds \eqref{eq3.25}. Next, replacing $x=\frac{t}{1+t}$ in \eqref{eq2.2} gives \eqref{eq3.26}. Finally, substituiting $x=\frac{1+t}{2}$ in \eqref{eq2.2} provides \eqref{eq3.27}.
		\end{proof}
	\end{theorem}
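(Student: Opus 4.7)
The plan is to obtain each of the three identities by applying the indicated substitution directly to the defining integral \eqref{eq2.2}, keeping careful track of the matrix-valued factors $x^{P-I}$ and $(1-x)^{Q-I}$ (which are well-defined as $\exp((P-I)\log x)$ and $\exp((Q-I)\log(1-x))$ for the scalar $x\in(0,1)$, so the substitution rules from scalar calculus apply unchanged) and of the scaling $a^{1-x}b^{x}$.

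For \eqref{eq3.25} I would set $x=\sin^{2}\theta$, so that $1-x=\cos^{2}\theta$, $dx=2\sin\theta\cos\theta\,d\theta$, and the limits $0,1$ transform to $0,\pi/2$. Grouping gives $x^{P-I}(1-x)^{Q-I}\,dx=2\sin^{2P-I}\theta\cos^{2Q-I}\theta\,d\theta$, while $a^{1-x}b^{x}=a\,(b/a)^{\sin^{2}\theta}$ and the Mittag--Leffler arguments become $-R\csc^{2\eta}\theta$ and $-S\sec^{2\xi}\theta$; collecting terms yields \eqref{eq3.25}. For \eqref{eq3.26} I would substitute $x=t/(1+t)$, so $1-x=1/(1+t)$ and $dx=(1+t)^{-2}dt$, with limits $0,\infty$. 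Then
\begin{equation*}
x^{P-I}(1-x)^{Q-I}dx=t^{P-I}(1+t)^{-(P+Q)}dt,
\end{equation*}
while $a^{1-x}b^{x}=a^{1/(1+t)}b^{t/(1+t)}$, and the Mittag--Leffler arguments become $-R((1+t)/t)^{\eta}=-R(1+t^{-1})^{\eta}$ and $-S(1+t)^{\xi}$, giving \eqref{eq3.26}. For \eqref{eq3.27} I would take $x=(1+t)/2$, so $1-x=(1-t)/2$, $dx=dt/2$, and limits $-1,1$; the powers combine to give the constant prefactor $2^{-(P-I)}\cdot 2^{-(Q-I)}\cdot 2^{-1}=2^{I}/2^{P+Q}$ times $(1+t)^{P-I}(1-t)^{Q-I}$, and $a^{1-x}b^{x}=\sqrt{ab}\,(b/a)^{t/2}$, while the Mittag--Leffler arguments turn into $-R((1+t)/2)^{-\eta}$ and $-S((1-t)/2)^{-\xi}$, yielding \eqref{eq3.27}.

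The only subtle point, and the one I expect to be the main obstacle, is justifying that the change of variables is valid at the level of the matrix-valued integrand. Since $P,Q,R,S\in\mathscr{M}_k$ are all fixed and the substitution is in the scalar variable $x$, the matrix factors behave as smooth matrix-valued functions of $x$, and standard Lebesgue change-of-variables applies componentwise. One should also check that the two-parameter Mittag--Leffler matrix series converges absolutely for each scalar argument along the new path (guaranteed by the spectral estimates on $R$ and $S$ used implicitly throughout the paper), so that the integrals on the right-hand sides of \eqref{eq3.25}--\eqref{eq3.27} are well-defined. Once these technicalities are noted, each identity reduces to a bookkeeping exercise in combining the transformed Jacobian, the matrix powers, the scalar factor $a^{1-x}b^{x}$, and the two Mittag--Leffler terms.
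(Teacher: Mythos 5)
Your proposal is correct and follows exactly the same route as the paper: each identity is obtained by the indicated change of variable ($x=\sin^{2}\theta$, $x=t/(1+t)$, $x=(1+t)/2$) applied directly to the defining integral \eqref{eq2.2}. You simply carry out the bookkeeping of the Jacobian, the matrix powers, and the factor $a^{1-x}b^{x}$ explicitly, which the paper leaves to the reader.
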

	\subsection*{Corollary 3.10}
	By setting $a=b=1$ in \eqref{eq3.25}, \eqref{eq3.26} and \eqref{eq3.27}, we obtain the integral representations of the function given in \eqref{eq2.1} as following:
	$$\mathfrak{B}_{(\phi,\psi)}^{(R,S,\eta,\xi)}(P, Q)$$
	\begin{equation}\label{eq3.28}
		=2\int_{0}^{\frac{\pi}{2}}~~{\sin^{2P-I}(\theta)\cos^{2Q-I}(\theta)}\mathcal{E}_{\phi,\psi}(-R\csc^{2\eta}(\theta))\mathcal{E}_{\phi,\psi}(-S\sec^{2\xi}(\theta)) d{\theta},
	\end{equation}
		$$\mathfrak{B}_{(\phi,\psi)}^{(R,S,\eta,\xi)}(P, Q)$$
	\begin{equation}\label{eq3.29}
		=\int_{0}^{\infty}~~\frac{t^{P-I}}{(1+t)^{P+Q}}\mathcal{E}_{\phi,\psi}(-R(1+t^{-1})^{\eta})\mathcal{E}_{\phi,\psi}(-S(1+t)^{\xi}) dt,
	\end{equation}
	and 
		$$\mathfrak{B}_{(\phi,\psi)}^{(R,S,\eta,\xi)}(P, Q)$$
	
	\begin{equation}\label{eq3.30}
		=\frac{2^I}{2^{P+Q}}\int_{-1}^{1}~~(1+t)^{P-I}(1-t)^{Q-I}\mathcal{E}_{\phi,\psi}(-R(\frac{1+t}{2})^{-\eta})\mathcal{E}_{\phi,\psi}(-S(\frac{1-t}{2})^{-\xi}) dt.
	\end{equation}\\
	
	The next theorem gives the higher-order derivative formula of the function given in \eqref{eq2.2}. 
	\begin{theorem}\label{thm3.6}
		For $\phi,\psi,\eta,\xi,a,b\in \mathbb{R}^{+};P,Q,R,S \in \mathscr{M}_k$ and $m,n \in \mathbb{N}_{0}$
		$$\frac{\partial^{(m+n)}}{\partial P^m \partial Q^n}	\left\{\mathfrak{BL}_{(\phi,\psi)}^{(R,S,\eta,\xi)}(a,b;P,Q)\right\}$$
		\begin{equation}\label{eq3.31}
			=\int_{0}^{1}\ln^{m}(x)\ln^{n}(1-x)a^{1-x}b^{x}x^{P-I}(1-x)^{Q-I}\mathcal{E}_{(\phi,\psi)}\left(\frac{-R}{x^{\eta}}\right)\mathcal{E}_{(\phi,\psi)}\left(\frac{-S}{(1-x)^{\xi}}\right)~dx.
		\end{equation}
	\end{theorem}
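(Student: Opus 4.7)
The plan is to begin from the integral definition \eqref{eq2.2} and differentiate under the integral sign with respect to the matrix parameters $P$ and $Q$. The crucial observation is the exponential representation $x^{P-I}=\exp\bigl((P-I)\ln x\bigr)$, from which one reads off that $\frac{\partial}{\partial P}x^{P-I}=\ln(x)\,x^{P-I}$, since the scalar factor $\ln(x)$ commutes with every matrix and so all issues of matrix non-commutativity in forming derivatives of matrix exponentials vanish here. An identical computation gives $\frac{\partial}{\partial Q}(1-x)^{Q-I}=\ln(1-x)\,(1-x)^{Q-I}$. Moreover, the factors $a^{1-x}b^{x}$ and the two Mittag-Leffler matrix functions in \eqref{eq2.2} do not involve $P$ or $Q$, so they pass through the differentiation untouched.

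Once this single-order rule is in hand, I would obtain the general formula \eqref{eq3.31} by a simple induction on $m$ and then on $n$: assuming the statement for $(m,n)$, one more application of $\partial/\partial P$ brings down an additional $\ln(x)$ (giving $\ln^{m+1}(x)$), and similarly one more $\partial/\partial Q$ brings down an additional $\ln(1-x)$. Because the two scalar logarithmic factors, the weight $a^{1-x}b^{x}$, the matrix power $x^{P-I}$, the matrix power $(1-x)^{Q-I}$, and the two Mittag-Leffler factors are all mutually commuting (the first three being scalars, and the remaining matrix factors all being polynomial in $R$, $S$, $P$, $Q$ individually), no reordering issue arises inside the integrand.

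The main obstacle — and really the only non-routine step — is justifying the interchange of the $(m+n)$-fold partial differentiation with the integral on $(0,1)$. For this I would appeal to a standard dominated-convergence argument: on any compact neighbourhood of $(P,Q)$ in $\mathscr{M}_k\times\mathscr{M}_k$, the norm of the integrand of \eqref{eq3.31} is bounded by a scalar function of the form $C\,|\ln x|^{m}|\ln(1-x)|^{n}x^{\alpha-1}(1-x)^{\beta-1}$ with $\alpha,\beta>0$ (using the positive-stability of $P,Q$ and the fact that both Mittag-Leffler matrix factors are bounded on compact sets, being entire in their matrix arguments), and this majorant is integrable on $(0,1)$ because logarithmic singularities are absorbed by any positive power of $x$ or $1-x$. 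This legitimises differentiating under the integral sign and completes the inductive step at every order, yielding \eqref{eq3.31}.
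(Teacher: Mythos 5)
Your proposal is correct and follows essentially the same route as the paper: differentiate under the integral sign, note that each application of $\partial/\partial P$ or $\partial/\partial Q$ brings down a factor of $\ln(x)$ or $\ln(1-x)$ respectively, and iterate. The only difference is that you supply a dominated-convergence justification for the interchange of differentiation and integration, which the paper omits (it simply invokes the Leibniz rule); this is a welcome addition but not a different method.
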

	\begin{proof}
		We compute the partial derivative of the equation \eqref{eq2.2} with respect to $P$ and $Q$, by applying the Leibniz rule for differentiation under integral sign, we obtain
		$$\frac{\partial^2}{\partial P \partial Q}	\left\{\mathfrak{BL}_{(\phi,\psi)}^{(R,S,\eta,\xi)}(a,b;P,Q)\right\}$$
		\begin{equation*}
			=\int_{0}^{1}\ln(x)\ln(1-x)a^{1-x}b^{x}x^{P-I}(1-x)^{Q-I}\mathcal{E}_{(\phi,\psi)}\left(\frac{-R}{x^{\eta}}\right)\mathcal{E}_{(\phi,\psi)}\left(\frac{-S}{(1-x)^{\xi}}\right)~dx.
		\end{equation*}
		By repeating the differentiation, we obtain our required result of \eqref{eq3.31}.
	\end{proof}
	\subsection*{Corollary 3.11} 
	By putting $a=b=1$ in \eqref{eq3.31}, we obtain the partial derivative formula for the function given in \eqref{eq2.1}, as following
	$$\frac{\partial^{(m+n)}}{\partial P^m \partial Q^n}	\left\{\mathfrak{B}_{(\phi,\psi)}^{(R,S,\eta,\xi)}(P,Q)\right\}$$
	\begin{equation}\label{eq3.32}
		=\int_{0}^{1}\ln^{m}(x)\ln^{n}(1-x)x^{P-I}(1-x)^{Q-I}\mathcal{E}_{(\phi,\psi)}\left(\frac{-R}{x^{\eta}}\right)\mathcal{E}_{(\phi,\psi)}\left(\frac{-S}{(1-x)^{\xi}}\right)~dx.
	\end{equation}

	\vspace{0.35cm}
	\section{\bf Numerical Representations}
	We illustrates the generalization presented in this paper with some examples, in this section matrices are from $\mathscr{M}_k.$ 
	\begin{example}
		For $\phi=1$, $\psi=2$, $\eta=0.5,$ $\xi=1$; $a=2$, $b=4$;
		$R= \begin{bmatrix}
			7 & 3\\
			3 & 4
		\end{bmatrix}$,
		$S= \begin{bmatrix}
			5 & 1\\
			1 & 3
		\end{bmatrix}$,
		$P= \begin{bmatrix}
			3 & 1\\
			1 & 2
		\end{bmatrix}$ and
		$Q= \begin{bmatrix}
			2 & 2\\
			2 & 5
		\end{bmatrix}$, then\\
		$x^{P-I}=x^{\begin{bmatrix}
				2 & 1\\
				1 & 1
		\end{bmatrix}}$, 
		$(1-x)^{Q-I}=(1-x)^{\begin{bmatrix}
				1 & 2\\
				2 & 4
		\end{bmatrix}}$,
		$\mathcal{E}_{(1,2)}{(-Rx^{-0.5})}=\large\sum_{k=0}^{\infty}{\frac{(-R)^k}{x^{0.5k}\xi(k +2)}}$ and\\
		$\mathcal{E}_{(1,2)}{(-S(1-x)^{-1})}=\large\sum_{k=0}^{\infty}{\frac{(-S)^k}{(1-x)^{k}\xi(k +2)}}$, thus we have\\
		$$\mathfrak{BL}_{(1,2)}^{(R,S,0.5,1)}(2,4;P,Q)$$
		$$=\int_{0}^{1}2^{1-x}4^{x}x^{\begin{bmatrix}
				2 & 1\\
				1 & 1
		\end{bmatrix}}(1-x)^{\begin{bmatrix}
				1 & 2\\
				2 & 4
		\end{bmatrix}}\mathcal{E}_{(1,2)}{(-Rx^{-0.5})}\mathcal{E}_{(1,2)}{(-S(1-x)^{-1})}dx~$$ 
		$$=\begin{bmatrix}
			0.0223 & -0.0369\\
			-0.0252 & 0.0423
		\end{bmatrix}.$$
		Similarly,\\
		$$\mathfrak{BL}_{(1,2)}^{(S,R,1,0.5)}(4,2;Q,P)=
		\begin{bmatrix}
			0.0334 & -0.0536\\
			-0.0192 & 0.0312
		\end{bmatrix}.$$
	\end{example}
	\begin{remark}
		From the above example we observe that, the symmetric property of the function \eqref{eq2.2} does not hold as: \\
		$$\mathfrak{BL}_{(1,2)}^{(R,S,0.5,1)}(2,4;P,Q)\ne\mathfrak{BL}_{(1,2)}^{(S,R,1,0.5)}(4,2;Q,P).$$
	\end{remark}	
	\begin{example}
		For $\phi=1$, $\psi=2$, $\eta=0.5$, $\xi=1$; $a=2$, $b=2$;
		$R= \begin{bmatrix}
			7 & 3\\
			3 & 4
		\end{bmatrix}$,
		$S= \begin{bmatrix}
			5 & 1\\
			1 & 3
		\end{bmatrix}$,
		$P= \begin{bmatrix}
			3 & 1\\
			1 & 2
		\end{bmatrix}$ and
		$Q= \begin{bmatrix}
			2 & 2\\
			2 & 5
		\end{bmatrix}$, then we have:
		$$\mathfrak{BL}_{(1,2)}^{(R,S,0.5,1)}(2,2;P,Q)=\begin{bmatrix}
			0.0154 & -0.0257\\
			-0.0182 & 0.0307
		\end{bmatrix},$$ and
		$$\mathfrak{BL}_{(1,2)}^{(S,R,1,0.5)}(2,2;Q,P)=\begin{bmatrix}
			0.0234 & -0.0378\\
			-0.0139 & 0.0227
		\end{bmatrix}.$$
	\end{example}
	\begin{remark}
		From the above example, we observe that, even if we take $a=b$, the symmetric property of the function \eqref{eq2.2} does not hold.
	\end{remark}	
	\begin{example}
		For $\phi=1$, $\psi=2$, $\eta=0.5$, $\xi=1$; $a=2$, $b=4$;
		$R= \begin{bmatrix}
			2 & 0\\
			0 & 2
		\end{bmatrix}$,
		$S= \begin{bmatrix}
			2 & 3\\
			-3 & 2
		\end{bmatrix}$,
		$P= \begin{bmatrix}
			0 & 1\\
			-1 & 0
		\end{bmatrix}$ and
		$Q= \begin{bmatrix}
			1 & 2\\
			-2 & 1
		\end{bmatrix}$, then we have \\
		$$\mathfrak{BL}_{(1,2)}^{(R,S,0.5,1)}(2,4;P,Q)=\begin{bmatrix}
			-0.2505 & 0.1049\\
			-0.1049 & -0.2505
		\end{bmatrix}=\mathfrak{BL}_{(1,2)}^{(S,R,1,0.5)}(4,2;Q,P).$$
	\end{example}
	
	\begin{example}
		For $\phi=1$, $\psi=2$, $\eta=0.5$, $\xi=1$; 
		$R= \begin{bmatrix}
			2 & 0\\
			0 & 2
		\end{bmatrix}$,
		$S= \begin{bmatrix}
			2 & 3\\
			-3 & 2
		\end{bmatrix}$,
		$P= \begin{bmatrix}
			0 & 1\\
			-1 & 0
		\end{bmatrix}$ and
		$Q= \begin{bmatrix}
			1 & 2\\
			-2 & 1
		\end{bmatrix}$, then we have
		$$\mathfrak{B}_{(1,2)}^{(R,S,0.5,1)}(P,Q)=\begin{bmatrix}
			-0.1015 & 0.0480\\
			-0.0480 & -0.1015
		\end{bmatrix}=\mathfrak{B}_{(1,2)}^{(S,R,1,0.5)}(P,Q).$$
	\end{example}
	
	\bigskip
	\begin{remark}
		After considering $P,Q,R$ and $S \in \mathscr{M}_{k} $ to be commutative in the above two examples, the symmetric property of the functions $\mathfrak{BL}_{(\phi,\psi)}^{(R,S,\eta,\xi)}(a,b;P,Q)$ and $\mathfrak{B}_{(\phi,\psi)}^{(R,S,\eta,\xi)}(P,Q)$, respectively does hold. Furthermore, the commutativity of the input elements ensures that the properties of the functions are consistent, making it easier to apply them in various mathematical contexts without loss of generality.
	\end{remark}	
	
	\section{\bf Graphical Representations}
	\subsection{\bf Graphical Comparison of the Generalized and Extended Beta-Logarithmic Matrix Function}
	We illustrate the graphical comparison of the generalized beta-logarithmic matrix function (GBLMF) presented in this paper with the previous extended beta-logarithmic matrix function (EBLMF) investigated by Alqarni \cite{alq}. Since, the comparison runs over two matrices, we use the infinity norms defined in \eqref{eq1.2}. All the matrices considered are $2\cross2$ in $\mathscr{M}_k$. The choice of used matrices are $P=\begin{bmatrix}
		1 & 0\\ 1 & 2
	\end{bmatrix}$ and $Q=\begin{bmatrix}
		1 & 1\\ 0 & 2
	\end{bmatrix}$ as in \cite{jod-cor} and multiple choices of $R$ and $S$ matrices have infinity-norm in $[0.1, 0.5]$. The matrix $R$ have been used in the exponential part of the function \cite{alq}.
\begin{figure}[H]
	\centering
	\includegraphics[width=0.7\linewidth]{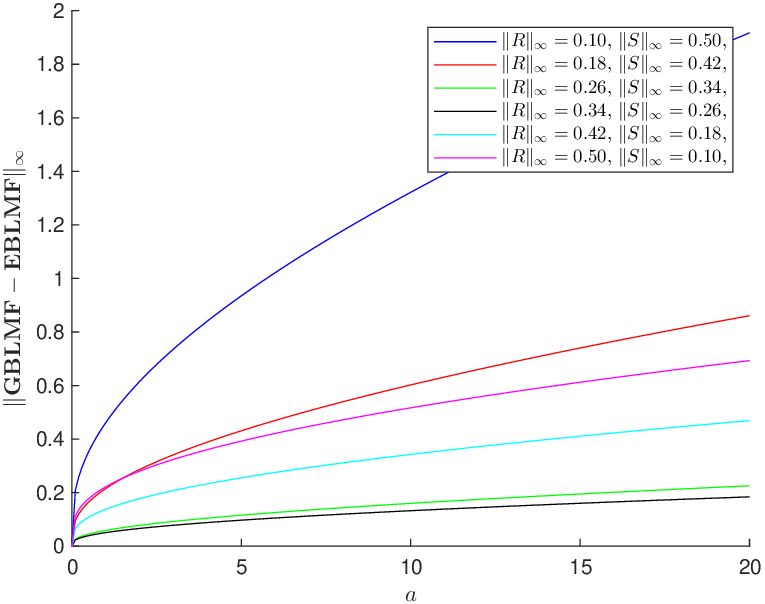}
	\caption{}
	\label{fig:figure1}
\end{figure}

	Figure 1 shows the graph of the difference between GBLMF and EBLMF against $a \in [0, 20]$ and $\phi=\psi=\eta=\xi=b=1$.  
\begin{figure}[H]
	\centering
	\includegraphics[width=0.7\linewidth]{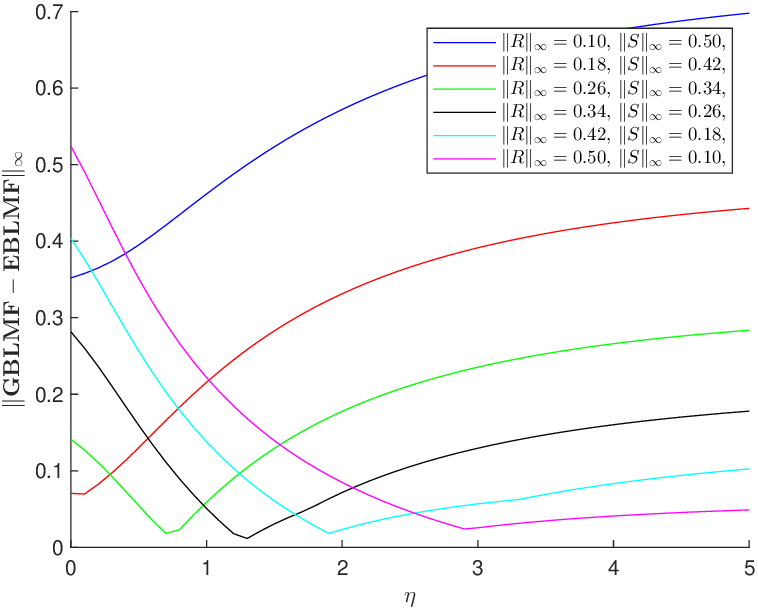}
	\caption{}
	\label{fig:figure2}
\end{figure}

Figure 2 shows the graph of the difference between GBLMF and EBLMF against $\eta \in [0, 5]$ and $\phi=\psi=\xi=a=b=1$.
\begin{figure}[H]
	\centering
	\includegraphics[width=0.7\linewidth]{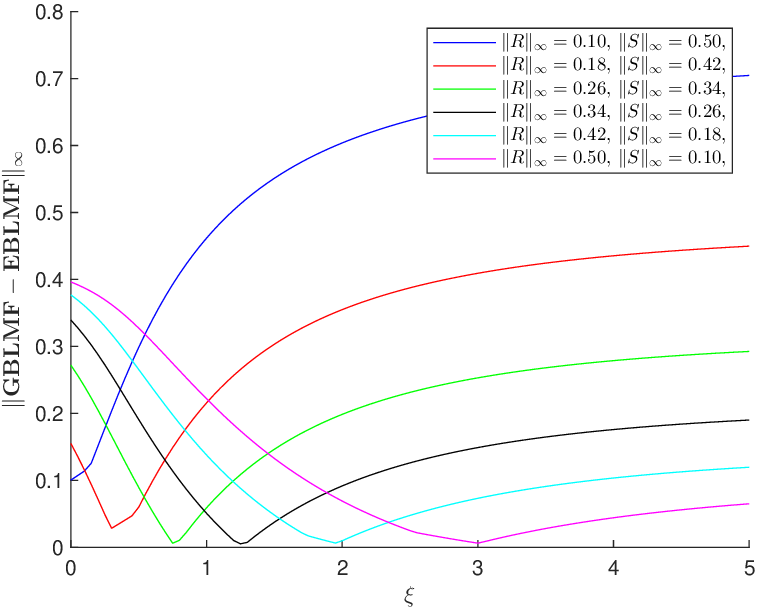}
	\caption{}
	\label{fig:figure3}
\end{figure}

Figure 3 shows the graph of the difference between GBLMF and EBLMF against $\xi \in [0, 5]$ and $\phi=\psi=\eta=a=b=1$.
	
\begin{figure}[H]
	\centering
	\includegraphics[width=0.7\linewidth]{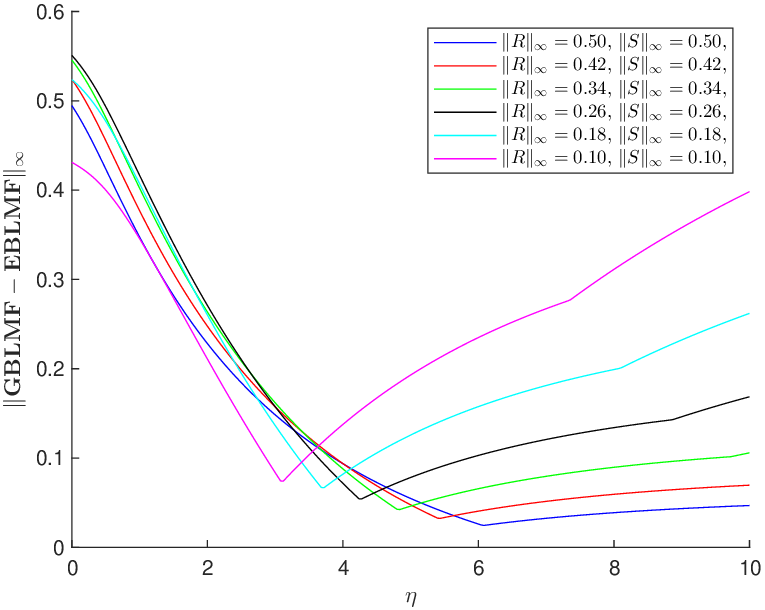}
	\caption{}
	\label{fig:figure4}
\end{figure}

	Figure 4 shows the graph of the difference between GBLMF and EBLMF against $\eta \in [0, 10]$. Both $R$ and $S$ approaches zero matrix. Here, $\phi=\xi=1$; $\psi=2$; $a=0.5$ and $b=2$.
\begin{figure}[H]
	\centering
	\includegraphics[width=0.7\linewidth]{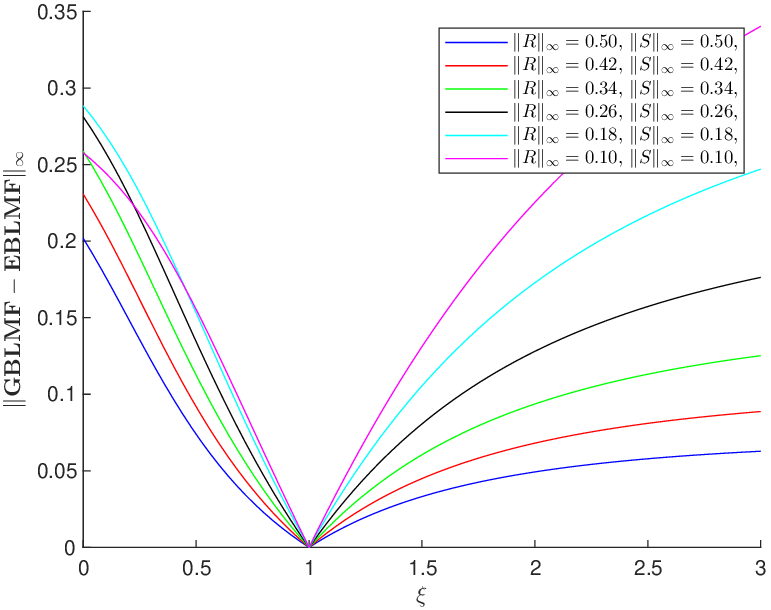}
	\caption{}
	\label{fig:figure5}
\end{figure}

	Figure 5 shows the graph of the difference between GBLMF and EBLMF against $\xi \in [0, 3]$. Both $R$ and $S$ approaches zero matrix. Here, $\phi=\psi=b=\eta=1$ and $a=0.5$.
	
	Here, the graphical comparison highlights the broader applicability and enhanced flexibility of the GBLMF compared to the EBLMF, demonstrating its potential for addressing a wider range of mathematical and applied problems. These advancements underscore the significance of the proposed function in extending the theoretical framework and its applicability to complex computational scenarios. Consequently, the GBLMF serves as a valuable tool for future research in mathematical analysis and related disciplines.
	\subsection{\bf Graphical Comparison of the Generalized Beta-Logarithmic and Classical Beta Matrix Function}
	We have illustrated the graphical comparison of the generalized beta-logarithmic matrix function (GBLMF) presented in this paper with the classical beta matrix function (CBMF) introduced by \cite{jod-cor}. The comparison highlights the enhanced versatility of the Generalized Beta-Logarithmic Matrix Function (GBLMF), which could be beneficial in applications requiring more tailored matrix transformations. These results suggest that the GBLMF may offer superior convergence properties or greater flexibility in numerical methods compared to the CBMF, especially in complex matrix analysis scenarios. The choice of the used matrices are $P=\begin{bmatrix}
		1 & 0\\ 1 & 2
	\end{bmatrix}$ and $Q=\begin{bmatrix}
		1 & 1\\ 0 & 2
	\end{bmatrix}$ as in \cite{jod-cor} and multiple choices of $R$ and $S$ matrices have infinity-norm in $[0.1, 0.5]$. 
\begin{figure}[H]
	\centering
	\includegraphics[width=0.7\linewidth]{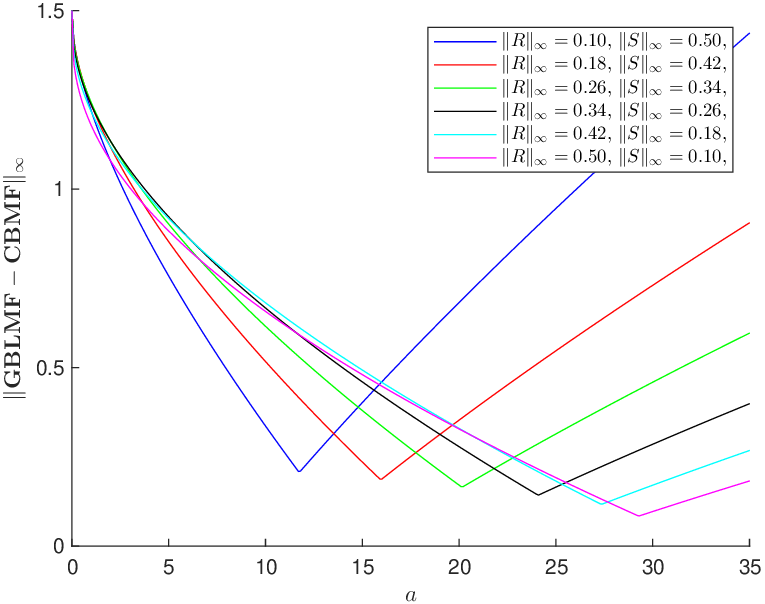}
	\caption{}
	\label{fig:figure6}
\end{figure}

	Figure 6 shows the graph of difference between GBLMF and CBMF against $a \in [0, 35]$. Here, $\phi=\psi=\eta=\xi=1$ and $b=1$.
	\begin{figure}[H]
		\centering
		\includegraphics[width=0.7\linewidth]{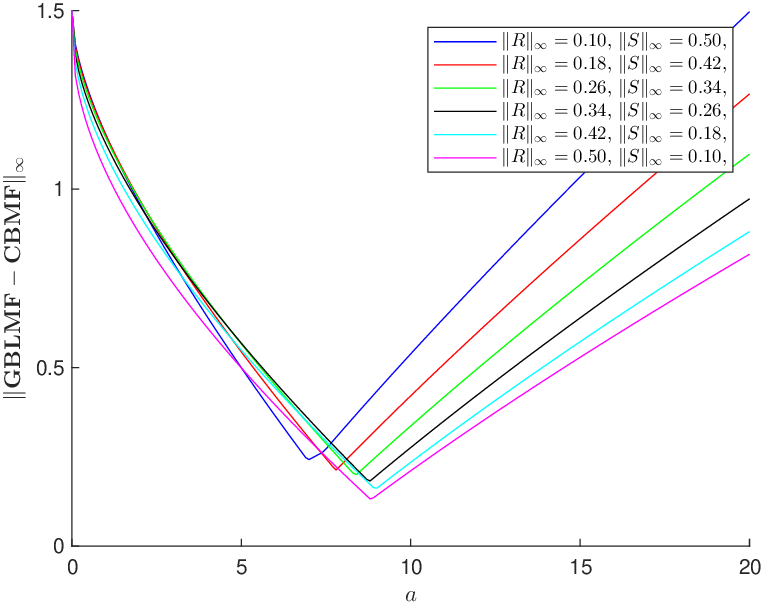}
		\caption{}
		\label{fig:figure7}
	\end{figure}

	Figure 7 shows the graph of difference between GBLMF and CBMF against $a \in [0, 20]$. Here, $\phi=\psi=\xi=1$, $\eta=0.5$ and $b=1$.
\begin{figure}[H]
	\centering
	\includegraphics[width=0.7\linewidth]{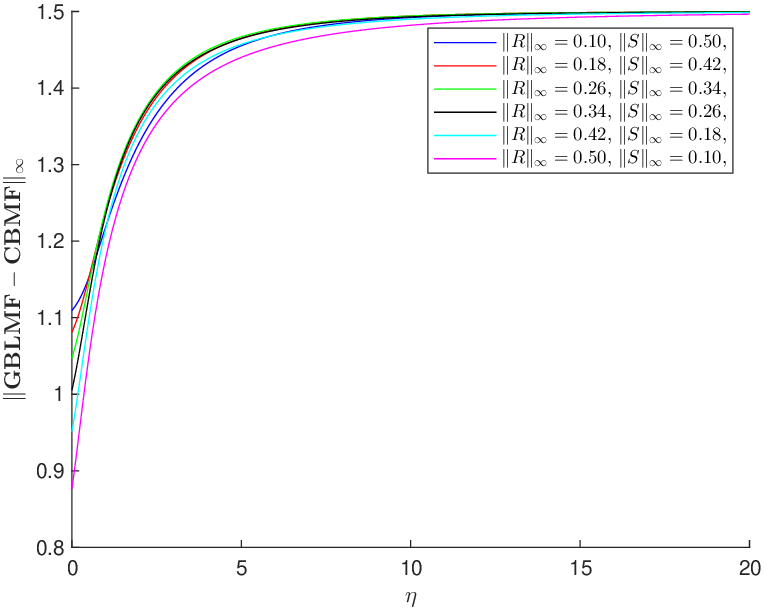}
	\caption{}
	\label{fig:figure8}
\end{figure}

	Figure 8 shows the graph of difference between GBLMF and CBMF against $\eta \in [0, 20]$. Here, $\phi=\psi=a=b=\xi=1$.

	\section{\bf Conclusion}
	In this article, we have thoroughly explored the generalized beta-logarithmic matrix function, which integrates the generalized beta matrix function with the logarithmic mean. Through detailed analysis, we have established a variety of essential properties of the function, including functional relations, inequalities, series expansions, integral representations, and partial derivative formulas. These findings enhance our understanding of the generalized beta-logarithmic matrix function and its potential applications in matrix analysis and related fields. Numerical examples and graphical representations have been provided to visually demonstrate the behavior of the function and to illustrate how it differs from both classical and previously studied versions of the beta matrix function. These comparisons highlight the novel aspects of the extended form, showcasing its advantages in terms of versatility and potential for broader applications.The extended beta-logarithmic matrix function opens up new avenues for further research, particularly in areas such as matrix inequalities, numerical methods, and applied mathematics. Future studies could explore additional applications in optimization problems, matrix theory, and other fields where matrix functions play a key role. In conclusion, this work provides a comprehensive theoretical framework for the extended beta-logarithmic matrix function, offering both a solid foundation for further theoretical investigation and practical tools for its application in various mathematical contexts.

		\vspace{0.75cm}
	\noindent\textbf{\textsf{Acknowledgments:}}
	The authors are very grateful to the anonymous referees for many valuable comments and suggestions which helped to improve
	the draft.
	
	\vspace{0.75cm}
	\noindent\textbf{\textsf{Author Contributions:}}
	All authors contributed equally to the present investigation.\\
	All authors read and approved the final manuscript.
	
	\vspace{0.2cm}
	\noindent\textbf{\textsf{Conflicts of Interest:}} The authors declare no conflicts of interest.
	
	\vspace{0.2cm}
	\noindent\textbf{\textsf{Funding (Financial Disclosure):}} This research received no external funding.


\begin{thebibliography}{100}
		
		\bibitem{ab-ba}
		M. Abdalla, and A. Bakhet, Extension of Beta matrix function, \emph{Asian Journal of Mathematics and Computer Research}, \textbf{9 }, 2016, 253-264.
		
		\bibitem{alq}
		M. Z. Alqarni, Exploring the Extended Beta-Logarithmic Function: Matrix Arguments and Properties \emph{Mathematics,} \textbf{12(11)}, 2024, 1674.
		
		
		\bibitem{cekim-b}
		B. Çekim, Generalized Euler's beta matrix and related functions, \emph{In AIP Conference Proceedings, American Institute of Physics}, \textbf{1558(1)}, 2013, 1132-1135. 
		
		\bibitem{dw-sa}
		R. Dwivedi, V. Sahai, On the hypergeometric matrix functions of two variables, \emph{Linear and Multilinear Algebra}, \textbf{66 (9)}, 2018, 1819-1837.
		
		\bibitem{dwiv-saha}
		R. Dwivedi, V. Sahai, A note on the Appell matrix functions, \emph{Quaestiones Mathematicae}, \textbf{43 (3)}, 2020, 321-334.
		
		
		\bibitem{gar-pop}
		R. Garrappa, and M. Popolizio, Computing the matrix Mittag-Leffler function with applications to fractional calculus, \emph{Journal of Scientific Computing}, \textbf{77(1)}, 2018, 129-153.
		
		
		\bibitem{goy-iri}
		R. Goyal, P. Agarwal, G. I. Oros, and S. Jain, Extended beta and gamma matrix functions via 2-parameter Mittag-Leffler matrix function, \emph{Mathematics}, \textbf{10(6)}, 2022, 892.
		
		\bibitem{jod-cor}	
		L. Jódar and J. C. Cortes, Some properties of Gamma and Beta matrix functions, \emph{ Applied Mathematics Letters}, \textbf{11(1)}, 1998, 89-93.
		
		\bibitem{jode-cort}
		L. Jódar and J. C. Cortes, On the hypergeometric matrix function,      \emph{Journal of Computational and Applied Mathematics}, \textbf{99(1-2)}, 1998, 205-217.
		
		
		\bibitem{kha-agar}
		G. S. Khammash, P. Agarwal, J. Choi, Extended k-Gamma and k-Beta functions of matrix arguments, \emph{Mathematics}, \textbf{8 (10)}, 2020, 1715.
		https://doi.org/10.3390/math8101715
		
		
		\bibitem{hus-kha}
		N. U. Khan, and S. Husain, A novel Beta matrix function via Wiman matrix function and their applications, \emph{Analysis}, \textbf{43(4)}, 255-266, 2023.
		 https://doi.org/10.1515/anly-2022-1098.
		
		\bibitem{khann-hhusain}
		N. U. Khan, R. Sk, and S. Husain, Generalized k-Gamma and k-Beta Functions of Matrix Arguments,  \emph{Recent Advances in Special Function and their Applications}, University of Kerala, India, 45-60, 2024, 45-60.\\ 
		ISBN: 978-93-340-9964-5.
		
		\bibitem{khan-sk}
		N. U. Khan and R. Sk, Analysis of the beta-logarithmic function and
		its properties, \emph{Analysis}, 2024.\\
		https://doi.org/10.1515/anly-2024-0019.
		
		
		\bibitem{khan-rakib}
		N. U. Khan, R. Sk, and S. Husain, Certain results on extended beta and related functions using matrix arguments, \emph{Journal of New Theory}, \textbf{49}, 2024, 16-29. 
		https://doi.org/10.53570/jnt.1534850.
		
		\bibitem{verma-sahai}
		A. Verma, R. Dwivedi, and V. Sahai, Some extended hypergeometric matrix functions and their fractional calculus, \emph{Mathematics in Engineering, Science and Aerospace}, \textbf{13 (4)}, 2022, 1131-1140.
		
		\bibitem{a-wiman}
		A. Wiman, “Über den fundamental satz in der theorie der funcktionen, $E_{\alpha}(z)$” \emph{Acta Mathematica}, \textbf{29}, 1905, 191–201.
		
		
		
		
		
		
		
		
		
		
		
		
		
		
	\end{thebibliography}
\end{document}